\newtheorem{theorem}{Theorem}[section]
\newtheorem{proposition}[theorem]{Proposition}
\title{A Fuss-type family of positive definite sequences}
\author{Wojciech M{\l}otkowski}
\author{Karol A. Penson}
\thanks{
W.~M. is supported by the Polish
National Science Center grant No. 2012/05/B/ST1/00626.}
\address{Instytut Matematyczny,
Uniwersytet Wroc{\l}awski,
Plac~Grunwaldzki~2/4,
50-384 Wroc{\l}aw, Poland}
\email{mlotkow@math.uni.wroc.pl}
\address{Sorbonne Universit\'{e}s, Universit\'{e} Pierre et Marie Curie,
Laboratoire de Physique Th\'{e}orique de la Mati\`{e}re
Condens\'{e}e (LPTMC), CNRS UMR
7600, Tour 13 - 5i\`{e}me \'{e}t., Bo\^{i}te Courrier 121, 4 place
Jussieu, F 75252 Paris Cedex 05, France}
\email{penson@lptl.jussieu.fr}
\subjclass[2010]{Primary 44A60; Secondary 33C20, 46L54}
\keywords{Mellin convolution, Meijer $G$-function, free probability}
\begin{document}

\begin{abstract}
We study a two-parameter family $a_{n}(p,t)$
of deformations of the Fuss numbers.
We show a sufficient condition for positive definiteness of $a_n(p,t)$
and prove that some of the corresponding probability measures
are infinitely divisible with respect to the additive free convolution.
\end{abstract}

\maketitle

\section{Introduction}

The aim of the paper is to study a two-parameter family
of sequences $a_{n}(p,t)$, $p,t\in\mathbb{R}$, defined by (\ref{oursequence1}),
which can be regarded as deformation of the Fuss numbers.
Assuming that $p\ge 0$ we prove that the sequence $a_{n}(p,t)$ is positive definite if and only if
$p\ge1$ and $g(p)\le t\le 2p/(p+1)$, where $g(p)$ is defined by (\ref{functiong}).
We conjecture that the assumption that $p\ge0$ is redundant.

The case $t=2p/(p+1)$ is particularly interesting by connections with the work \cite{bousquetschaeffer} of
M.~Bousquet-M\'{e}lou and G.~Schaeffer.
They introduced the notion of constellation
as a tool for studying factorization problems in the symmetric groups.
For $p\ge2$ a \textit{$p$-constellation} is a $2$-cell decomposition
of the oriented sphere into vertices, edges and faces,
with faces colored black and white in such a way that:

\begin{itemize}

\item all faces adjacent to a given white face are black and vice versa,

\item the degree of any black face is $p$,

\item the degree of any white face is a multiple of $p$.

\end{itemize}
A constellation is called \textit{rooted} if one of the edges is distinguished.

The number of rooted $p$-constellations formed of $n$ polygons,
counted up to isomorphism, is given by
\begin{equation}\label{constellations}
C_p(n):=\binom{np}{n}\frac{(p+1)p^{n-1}}{(np-n+1)(np-n+2)},
\end{equation}
$p\ge2$, $n\ge1$,
see Corollary~2.4 in~\cite{bousquetschaeffer}.
Some of these sequences appear in the On-line Encyclopedia of Integer Sequences (OEIS) \cite{oeis}, namely: $C_2=A000257$, $C_3=A069726$, $C_4=A090374$.

We will prove that the probability distribution $\eta(p,t)$
corresponding to positive definite sequence $a_n(p,t)$ is absolutely
continuous, except for $\eta(1,1)=\delta_1$,
and the support of $\eta(p,t)$ is $[0,p^p (p-1)^{1-p}]$.
The density function will be denoted $f_{p,t}(x)$.
For $p=2$ and $p=3$ we compute the $R$-transform of $\eta(p,t)$.
We prove that $\eta(2,p)$ (resp. $\eta(3,t)$) is infinitely divisible
with respect to the additive free convolution if and only if
$1\le t\le 4/3$ (resp. $1/2\le t\le 3/2$).

Finally, let us record some other sequences from OEIS which are related to this work:
$A005807$: $2a_n(2,1/2)$ (sums of adjacent Catalan numbers),
$A007226$: $2a_n(3,1/2)$ (studied in~\cite{mlope2013}),
$A007054$: $3a_n(2,4/3)$ (super ballot numbers),
$A038629$: $3a_n(2,2/3)$,
$A000139$: $2a_{n}(3,3/2)$,
$A197271$: $5 a_{n}(4,8/5)$,
$A197272$: $3 a_{n}(5,5/3)$.
In Section~\ref{sectionfree}  we also encounter sequences $A022558$ and $A220910$.

\section{Fuss numbers}\label{sectionfuss}
The \textit{Fuss-Catalan numbers} $\binom{np+1}{n}\frac{1}{np+1}$ have several combinatorial applications,
see \cite{gkp,edelman,armstrong2009,schuetzwhieldon,bousquetschaeffer,przytyckisikora2000}. They count for example:
\begin{enumerate}

\item the number of ways of subdividing a convex polygon, with $n(p-1)+2$ vertices,
into $n$ disjoint $p+1$-gons by means of nonintersecting diagonals,

\item the number of sequences $(a_1,a_2,\ldots,a_{np})$, where $a_i\in\{1,1-p\}$,
with all partial sums $a_1+\ldots+a_k$ nonnegative and with $a_1+\ldots+a_{np}=0$,

\item the number of noncrossing partitions $\pi$ of $\{1,2,\ldots,n(p-1)\}$,
such that $p-1$ divides the cardinality of every block of $\pi$,

\item the number of $p$-cacti formed of $n$ polygons,
see \cite{bousquetschaeffer}. % Corollary 2.5, page 345/9.
\end{enumerate}

The generating function:
\begin{equation}
\mathcal{B}_{p}(z):=\sum_{n=0}^{\infty}\binom{np+1}{n}\frac{z^n}{np+1}
\end{equation}
satisfies
\begin{equation}\label{fussrelation}
\mathcal{B}_p(z)=1+z\mathcal{B}_{p}(z)^p.
\end{equation}
Recall also the Lambert's formula for the Taylor expansion of the powers of $\mathcal{B}_{p}(z)$:
\begin{equation}
\mathcal{B}_{p}(z)^r=\sum_{n=0}^{\infty}\binom{np+r}{n}\frac{r z^n}{np+r}.
\end{equation}
These formulas remain true for $p,r\in\mathbb{R}$ and
the coefficients $\binom{np+r}{n}\frac{r}{np+r}$
(understood to be $1$ for $n=0$
and $\frac{r}{n!}\prod_{i=1}^{n-1}(np+r-i)$ for $n\ge1$)
are called \textit{two-parameter Fuss numbers} or \textit{Raney numbers},
see \cite{gkp,mlotkowski2010,pezy,liupego2014,forresterliu2014}.

In some cases the function $\mathcal{B}_p$ can be written explicitly, for example
\begin{align*}
\mathcal{B}_2(z)&=\frac{2}{1+\sqrt{1-4z}}
=\frac{1-\sqrt{1-4z}}{2z},\\
\mathcal{B}_3(z)
&=\frac{3}{3-4\sin^2\alpha},\\
\mathcal{B}_{3/2}(z)
&=\frac{3}{\left(\sqrt{3}\cos\beta-\sin\beta\right)^2},
\end{align*}
where $\alpha=\frac{1}{3}\arcsin\left(\sqrt{27z/4}\right)$,
$\beta=\frac{1}{3}\arcsin\left(3z\sqrt{3}/2\right)$, see~\cite{mlotkowskipensonbinomial}.

Fuss numbers also have applications in free probability and in the theory of random matrices,
as moments of the multiplicative free powers of the Marchenko-Pastur distribution
\cite{alexeev2010,bbcc,mlotkowski2010,mlonopezy,neuschel}.
This implies that for $p\ge1$ the sequence $\binom{np+1}{n}\frac{1}{np+1}$ is
positive definite. More generally, the sequence $\binom{np+r}{n}\frac{r}{np+r}$
is positive definite if and only if either $p\ge0$, $0\le r\le p$, or $p\le0$, $p-1\le r\le0$ or $r=0$,
see \cite{mlotkowski2010,mlotkowskipensonbinomial,liupego2014,forresterliu2014}.
The case $r=0$ is trivial, as it gives the sequence $1,0,0,0,\ldots$,
moments of $\delta_0$. The distributions corresponding to the second case,  $p\le0$, $p-1\le r\le0$,
are just reflections of those corresponding to $p\ge0$, $0\le r\le p$.
It is a consequence of the identity
\begin{equation}
\binom{np+r}{n}\frac{r(-1)^n}{np+r}=\binom{n(1-p)-r}{n}\frac{-r}{n(1-p)-r}.
\end{equation}

For $p>1$, $r>0$ we have the following integral representation:
\[
\binom{np+r}{n}\frac{r}{np+r}=\int_{0}^{c(p)}x^n W_{p,r}(x)\,dx,
\]
where where $c(p):=p^p(p-1)^{1-p}$, and $W_{p,r}$ can be described as:
\begin{equation}\label{forrestera}
W_{p,r}(x)=\frac{\left(\sin(p-1)\phi\right)^{p-r-1}\sin\phi \sin r\phi}{\pi\left(\sin p\phi\right)^{p-r}},
\end{equation}
where
\begin{equation}\label{forresterb}
x=\rho(\phi)=\frac{\left(\sin p\phi\right)^p}{\sin\phi\left(\sin(p-1)\phi\right)^{p-1}},\quad 0<\phi<\pi/p.
\end{equation}
This function is nonnegative if and only if $r\le p$, see \cite{haaagerupmoller2014,neuschel,forresterliu2014}.

If $p=k/l$ is a rational number, $1\le l<k$, then
$W_{p,r}$ can be expressed in terms of the Meijer $G$-function (see \cite{pezy,mlopezy2013}):
\begin{equation}\label{meijerth1}
W_{p,r}(x)=\frac{rp^{r}}{x(p-1)^{r+1/2}\sqrt{2k\pi}}\,
G^{k,0}_{k,k}\!\left(\frac{x^{l}}{c(p)^l}\left|\!\!
\begin{array}{ccc}
\alpha_1,\!\!&\!\!\ldots,\!\!&\!\!\alpha_k\\
\beta_1,\!\!&\!\!\ldots,\!\!&\!\!\beta_k
\end{array}
\!\right.\right),
\end{equation}
$x\in(0,c(p))$ and the parameters $\alpha_j,\beta_j$ are given by:
\begin{align}
\alpha_j&=\left\{\begin{array}{ll}
\cfrac{j}{l}&\mbox{if $1\le j\le l$,}\label{convalpha1}\\
\cfrac{r+j-l}{k-l}&\mbox{if $l+1\le j\le k$,}
\end{array}\right.\\
\beta_j&=\frac{r+j-1}{k},\quad\qquad{1\le j\le k}.\label{convbeta1}
\end{align}

\textbf{Examples:} Let us record formulas for the functions $W_{p,r}$ for $p=2,3,3/2$ and $r=1,2$.
In these cases $W_{p,r}$ can be expressed as an elementary function,
see \cite{pensonsolomon,pezy,mlopezy2013}.
\begin{align}
W_{2,1}(x)&=\frac{1}{2\pi}\sqrt{\frac{4-x}{x}},\\
W_{2,2}(x)&=\frac{1}{2\pi}\sqrt{x(4-x)},\label{eta20}
\end{align}
where $x\in(0,4)$. $W_{2,1}$ is the density of the Marchenko-Pastur distribution
and $W_{2,2}$ is the Wigner's semicircle law translated by~2.

\begin{align}
W_{3,1}(x)&=\frac{3\left(1+\sqrt{1-4x/27}\right)^{2/3}-(4x)^{1/3}}
{3^{1/2}\pi (4x)^{2/3}\left(1+\sqrt{1-4x/27}\right)^{1/3}},\\
W_{3,2}(x)&=\frac{9\left(1+\sqrt{1-4x/27}\right)^{4/3}-(4x)^{2/3}}
{2\pi 3^{3/2} (4x)^{1/3}\left(1+\sqrt{1-4x/27}\right)^{2/3}},
\end{align}
where $x\in(0,27/4)$.

\begin{align}
W_{3/2,1}(x)=3^{1/2}&\frac{\left(1+\sqrt{1-4x^2/27}\right)^{1/3}
-\left(1-\sqrt{1-4x^2/27}\right)^{1/3}}
{2(2x)^{1/3}\pi}\\
+3^{1/2}(2x)^{1/3}&\frac{\left(1+\sqrt{1-4x^2/27}\right)^{2/3}
-\left(1-\sqrt{1-4x^2/27}\right)^{2/3}}
{4\pi},\nonumber
\end{align}
\begin{align}
W_{3/2,2}(x)=\frac{3^{1/2}(2x)^{5/3}}{8\pi}
&\left(\left(1+\sqrt{1-4x^2/27}\right)^{1/3}-\left(1-\sqrt{1-4x^2/27}\right)^{1/3}\right)\\
+\frac{3^{1/2}(2x)^{1/3}(x^2-1)}{4\pi}
&\left(\left(1+\sqrt{1-4x^2/27}\right)^{2/3}-\left(1-\sqrt{1-4x^2/27}\right)^{2/3}\right),\nonumber
\end{align}
where $x\in(0,3\sqrt{3}/2)$. The function $W_{3/2,2}(x)$ is not nonnegative on its domain.

\section{A family of sequences}\label{sectionsequence}
For $p,t\in\mathbb{R}$ define sequence $a_n(p,t)$ as an affine combination
of $\binom{np+1}{n}\frac{1}{np+1}$ and $\binom{np+2}{n}\frac{2}{np+2}$:
\begin{align}
a_{n}(p,t):=
&\binom{np+1}{n}\frac{t}{np+1}+\binom{np+2}{n}\frac{2(1-t)}{np+2}\label{oursequence1}\\
=&\binom{np}{n}\frac{n(2p-t-pt)+2}{(np-n+1)(np-n+2)},
\end{align}
in particular $a_0(p,t)=1$.

The generating function is
\begin{equation}
t\mathcal{B}_{p}(z)+(1-t)\mathcal{B}_{p}(z)^2=\sum_{n=0}^{\infty}a_{n}(p,t)z^n.
\end{equation}
For example:
\begin{align*}
t\mathcal{B}_2(z)+(1-t)\mathcal{B}_2(z)^2
&=\frac{1-t+3tz-2z-(1-t+tz)\sqrt{1-4z}}{2z^2},\\
t\mathcal{B}_3(z)+(1-t)\mathcal{B}_3(z)^2
&=\frac{9-12t\sin^2\alpha}{\left(3-4\sin^2\alpha\right)^2},\\
t\mathcal{B}_{3/2}(z)+(1-t)\mathcal{B}_{3/2}(z)^2
&=\frac{9-6t\sin^2\beta+6t\sqrt{3}\sin\beta\cos\beta}{\left(\sqrt{3}\cos\beta-\sin\beta\right)^4}
\end{align*}
where $\alpha=\frac{1}{3}\arcsin\left(\sqrt{27z/4}\right)$,
$\beta=\frac{1}{3}\arcsin\left(3z\sqrt{3}/2\right)$.

We are going to study positive definiteness of $a_n(p,t)$.
First we observe
\begin{proposition}\label{2propositionnecessary}
If the sequence $a_{n}(p,t)$ is positive definite then
\begin{equation}\label{2necessarycondition}
2p-pt-t^2+3t-3\ge0.
\end{equation}
In particular $t\ne2$ and either $p\le-3$ or $p\ge1$.
\end{proposition}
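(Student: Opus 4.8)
The plan is to use the most elementary necessary condition for positive definiteness: every Hankel determinant of a positive definite sequence is nonnegative, and in particular the $2\times2$ one. Since $a_0(p,t)=1$, this says $a_0a_2-a_1^2=a_2-a_1^2\ge0$. So the first step is to read off $a_1(p,t)$ and $a_2(p,t)$ from the defining formula (\ref{oursequence1}). Evaluating the affine combination at $n=1$ gives $a_1(p,t)=t+2(1-t)=2-t$, and at $n=2$ a short computation with $\binom{2p+1}{2}=p(2p+1)$ and $\binom{2p+2}{2}=(p+1)(2p+1)$ gives $a_2(p,t)=pt+(1-t)(2p+1)=2p+1-pt-t$.

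Second, I would compute $a_2-a_1^2=(2p+1-pt-t)-(2-t)^2$, which expands exactly to $2p-pt-t^2+3t-3$. Hence positive definiteness of $a_n(p,t)$ forces $a_2-a_1^2\ge0$, which is precisely (\ref{2necessarycondition}); this proves the first assertion.

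Third, for the ``in particular'' part I would rewrite (\ref{2necessarycondition}) as $p(2-t)\ge t^2-3t+3$ and observe that the right-hand side equals $(t-\tfrac32)^2+\tfrac34>0$ for every real $t$. This rules out $t=2$ at once (it would give $0\ge1$) and shows $p(2-t)>0$. Splitting into the cases $t<2$ and $t>2$ and dividing by $2-t$ while tracking its sign, the conclusions $p\ge1$ and $p\le-3$ respectively follow from the algebraic identities
\[
\frac{t^2-3t+3}{2-t}=1+\frac{(t-1)^2}{2-t},\qquad
\frac{t^2-3t+3}{2-t}=-3+\frac{(t-3)^2}{2-t},
\]
since the correction terms have the appropriate sign in each case.

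There is essentially no real obstacle; the only point worth noting is that the single $2\times2$ Hankel determinant already captures the entire content of (\ref{2necessarycondition}), so no deeper positivity input (such as the integral representations of Section~\ref{sectionfuss}) is needed here, and the remaining work is the routine case analysis just described.
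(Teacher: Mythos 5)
Your proof is correct and takes essentially the same approach as the paper, which simply observes that the left-hand side of (\ref{2necessarycondition}) equals $a_2(p,t)-a_1(p,t)^2$. You additionally carry out the routine case analysis for the ``in particular'' claims, which the paper leaves implicit; your computations there are all accurate.
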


\begin{proof}
The left hand side is just $a_2(p,t)-a_1(p,t)^2$.
\end{proof}

\textbf{Examples.}

\textbf{1.}
For $p=1$ we have $a_n(1,t)=1+n-nt$.
Since $a_2(1,t)-a_1(1,t)^2=-(t-1)^2$, the sequence $a_n(1,t)$
is positive definite if and only if $t=1$.
Note that $a_n(1,1)=1$ is the moment sequence of
the one-point measure $\delta_1$.

\textbf{2.} For $t=2/(p+1)$ we get
\[
a_{n}\left(p,2/(p+1)\right)=\binom{np}{n}\frac{2}{np-n+2}.
\]
If $p>1$ then this is product of two positive definite sequences:
$\binom{np}{n}$ (see \cite{mlotkowskipensonbinomial,simon2014}) and $2/(np-n+2)$.

\textbf{3.} Similarly, for $p>1$, $t=2p/(p+1)$ the sequence
\[
a_{n}\left(p,2p/(p+1)\right)=\binom{np}{n}\frac{2}{(np-n+1)(np-n+2)}.
\]
is positive definite.
Note that from (\ref{constellations}) we have
\begin{equation}
C_p(n)=\frac{(p+1)p^n}{2p}a_n\left(p,\frac{2p}{p+1}\right),
\end{equation}
so for $p\ge1$ the sequence $C_{p}(n)$ is positive definite.

The sequence $a_{n}(p,t)$ is an
affine combination of two sequences:
$\binom{np+1}{n}\frac{1}{np+1}$ and
$\binom{np+2}{n}\frac{2}{np+2}$.
The former is positive definite
for $p\ge1$ and the latter for $p\ge2$.
This implies, that $a_{n}(p,t)$ is positive definite for $p\ge2$, $0\le t\le 1$.
We are going to prove something stronger.
Note that if $t_1\le t_2\le t_3$ and the sequences $a_{n}(p,t_1)$, $a_n(p,t_3)$
are positive definite then so is $a_n(p,t_2)$ as their convex combination.

If we assume that $p>1$ then
\[
a_{n}(p,t)=\int_{0}^{c(p)}x^n f_{p,t}(x)\,dx,
\]
where
\[
f_{p,t}(x)=t W_{p,1}(x)+(1-t)W_{p,2}(x).
\]
Then the positive definiteness of $a_n(p,t)$ is equivalent to the fact
that $f_{p,t}$ is nonnegative on $(0,c(p))$.
For example the function
\begin{equation}\label{2densityfor2}
f_{2,t}(x)=\frac{t+x-tx}{2\pi}\sqrt{\frac{4-x}{x}}
\end{equation}
is nonnegative on $(0,4)$ if and only if $0\le t\le 4/3$.

\begin{figure}
\caption{The density function $f_{3/2,1/5}(x)$}
\centering
\includegraphics[width=0.7\textwidth]{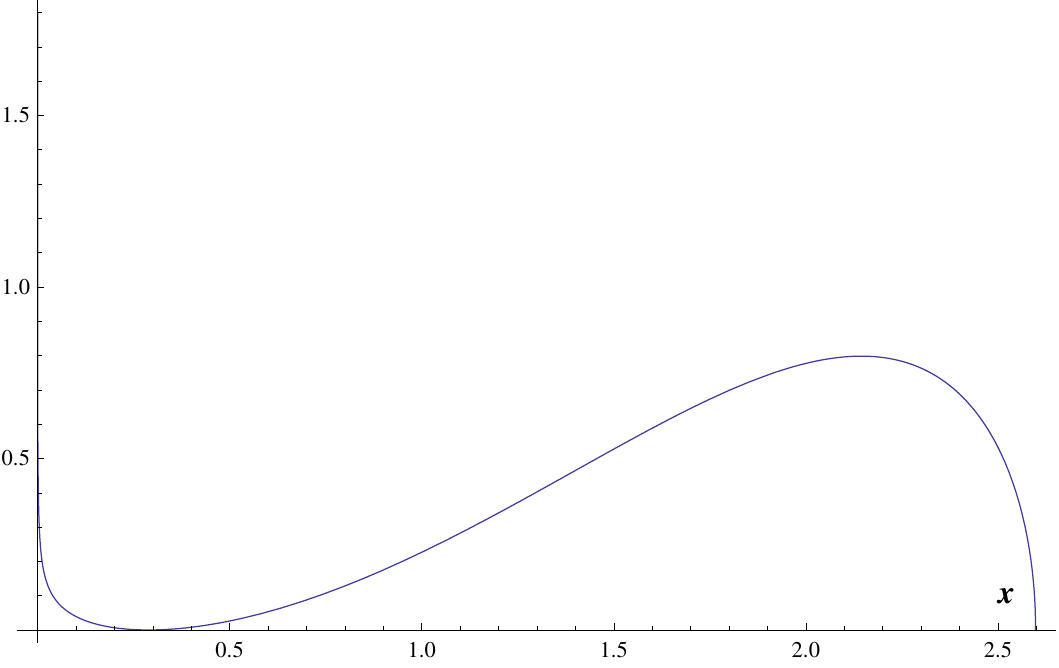}
\label{case3215}
\end{figure}

By (\ref{forrestera}) we can write
\begin{equation}
f_{p,t}(x)=\frac{\sin^2\phi\left(\sin(p-1)\phi\right)^{p-3}\big[t\sin(p-1)\phi+2(1-t)\sin p\phi\cos\phi\big]}
{\pi\left(\sin p\phi\right)^{p-1}}
\end{equation}
for $x$ as in (\ref{forresterb}).
Define
\begin{align}
\Psi_{p,t}(\phi)&=t\sin(1-1/p)\phi+2(1-t)\sin\phi\cos\phi/p\label{psi}\\
&=(2-t)\sin\phi\cos\phi/p-t\cos\phi\sin\phi/p\nonumber\\
&=(1-t)\sin(1+1/p)\phi+\sin(1-1/p)\phi.\nonumber
\end{align}
Then the sequence $a_{n}(p,t)$ is positive definite if and only if
$\Psi_{p,t}(\phi)\ge0$ for $\phi\in[0,\pi]$.
For $p\ge1$ put
\begin{equation}\label{functiong}
g(p):=\min\{t\in \mathbb{R}:\Psi_{p,t}(\phi)\ge0\hbox{ for all }0<\phi<\pi\}.
\end{equation}
Since $\Psi_{p,t}(\pi)=t\sin(\pi/p)$ and $\Psi_{p,1}(\phi)=\sin(1-1/p)\phi$,
we have $0\le g(p)\le1$ for all $p\ge1$.

\begin{proposition}
The function $g$ is continuous on $[1,\infty)$,
$g(1)=1$, $g(p)=0$ for $p\ge2$ and is strictly decreasing on $[1,2]$.
In particular $g(3/2)=1/5$.
\end{proposition}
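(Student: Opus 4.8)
The plan is to argue throughout with the trigonometric function $\Psi_{p,t}$ of (\ref{psi}), so that by definition $g(p)$ is the least $t$ with $\Psi_{p,t}\ge0$ on $(0,\pi)$. Write $a=1+\tfrac1p$, $b=1-\tfrac1p$, $\phi_0=\pi/a=p\pi/(p+1)$. For $p>1$ one has $\sin b\phi>0$ throughout $(0,\pi)$, while $\sin a\phi$ is $>0$ on $(0,\phi_0)$, vanishes at $\phi_0$, and is $<0$ on $(\phi_0,\pi)$ (since $1<a<2$). Writing $\Psi_{p,t}(\phi)=\sin a\phi\,\bigl(\lambda_p(\phi)-t\bigr)$ for $\phi\ne\phi_0$, where $\lambda_p(\phi):=1+\sin b\phi/\sin a\phi$, and noting $\Psi_{p,t}(\phi_0)=\sin b\phi_0>0$, one reads off that $\Psi_{p,t}\ge0$ on $(0,\pi)$ holds exactly when $\sup_{(\phi_0,\pi)}\lambda_p\le t\le\inf_{(0,\phi_0)}\lambda_p$; this interval contains $t=1$ (there $\Psi_{p,1}=\sin b\phi>0$), so $g(p)=\sup_{\phi\in(\phi_0,\pi)}\lambda_p(\phi)$. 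Three consequences: $\lambda_p(\phi)\to0$ as $\phi\to\pi^-$ gives $g(p)\ge0$; the second of the Examples above ($t=2/(p+1)$ is admissible for $p>1$) gives $g(p)\le 2/(p+1)<1$; and since $\lambda_p>1$ on $(0,\phi_0)$ while $\lambda_p\le g(p)$ on $(\phi_0,\pi)$, the factorisation shows $\Psi_{p,t}>0$ on all of $(0,\pi)$ whenever $g(p)<t<1$.

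The individual evaluations are then short. For $p=1$, $\Psi_{1,t}(\phi)=(1-t)\sin2\phi$ changes sign on $(0,\pi)$ unless $t=1$, so $g(1)=1$. Using $\sin a\phi+\sin b\phi=2\sin\phi\cos(\phi/p)$ we get $\Psi_{p,0}(\phi)=2\sin\phi\cos(\phi/p)$, which is $\ge0$ on $(0,\pi)$ when $p\ge2$ (then $\phi/p\le\pi/2$) but negative for $\phi\in(p\pi/2,\pi)$ when $p<2$; hence $g(p)=0$ for $p\ge2$ and $g(p)>0$ for $1\le p<2$. For $p=\tfrac32$, with $u=\phi/3$ and $\sin 5u=5\sin u-20\sin^3u+16\sin^5u$ one finds $5\,\Psi_{3/2,1/5}(\phi)=4\sin\tfrac{5\phi}{3}+5\sin\tfrac{\phi}{3}=\sin\tfrac{\phi}{3}\,\bigl(8\sin^2\tfrac{\phi}{3}-5\bigr)^2\ge0$ on $(0,\pi)$, so $g(3/2)\le\tfrac15$; at the $\phi^*\in(0,\pi)$ with $8\sin^2(\phi^*/3)=5$ (it lies in $(0,\pi)$ since $\sqrt{5/8}<\sqrt3/2$) one has $\Psi_{3/2,1/5}(\phi^*)=0$ and $\sin(5\phi^*/3)<0$, hence $\Psi_{3/2,t}(\phi^*)=(\tfrac15-t)\sin(5\phi^*/3)<0$ for $t<\tfrac15$; therefore $g(3/2)=\tfrac15$.

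For strict monotonicity on $[1,2]$ I substitute $\theta=a\phi-\pi\in(0,\pi/p)$ in the supremum for $g$, which becomes $g(p)=\sup_{0<\theta<\pi/p}F_p(\theta)$ with $F_p(\theta)=1-\sin\!\bigl(r(\theta+\pi)\bigr)/\sin\theta$ and $r=(p-1)/(p+1)$. Given $1\le p_1<p_2\le2$ one has $r_1<r_2$, and for $\theta$ in the common domain $(0,\pi/p_2)$ the argument $r_2(\theta+\pi)$ stays below $r_2(1+1/p_2)\pi=(1-1/p_2)\pi\le\pi/2$; since $\sin$ is strictly increasing on $[0,\pi/2]$, this forces $\sin(r_1(\theta+\pi))<\sin(r_2(\theta+\pi))$, i.e. $F_{p_1}(\theta)>F_{p_2}(\theta)$ for every such $\theta$. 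If $p_2<2$ then $g(p_2)>0$, and since $F_{p_2}\to-\infty$ as $\theta\to0^+$ and $F_{p_2}\to0$ as $\theta\to(\pi/p_2)^-$, the supremum for $p_2$ is attained at an interior point $\theta^*$; hence $g(p_1)\ge F_{p_1}(\theta^*)>F_{p_2}(\theta^*)=g(p_2)$. If $p_2=2$ then $g(p_1)>0=g(2)$. Thus $g$ is strictly decreasing on $[1,2]$; the estimate $(1-1/p_2)\pi\le\pi/2$ is exactly where $p\le2$ is used.

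It remains to prove continuity; on $[2,\infty)$, $g\equiv0$. Lower semicontinuity at every $p\ge1$ is immediate: if $p_k\to p$ and $g(p_k)\to\ell<g(p)$, pick $t^*\in(\ell,g(p))$; then $t^*\in[g(p_k),1]$ for large $k$, which lies in the (convex) set of admissible $t$, so $\Psi_{p_k,t^*}\ge0$ on $(0,\pi)$, and letting $k\to\infty$ pointwise in $\phi$ gives $\Psi_{p,t^*}\ge0$ on $(0,\pi)$, contradicting $t^*<g(p)$. Together with $g\le1$ this already gives continuity at $p=1$. For upper semicontinuity at $p>1$, fix $\epsilon>0$ small enough that $t:=g(p)+\epsilon<1$; then $\Psi_{p,t}>0$ on $(0,\pi)$ as observed above. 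Split $(0,\pi)$ into a small neighbourhood of $0$, where $\Psi_{p',t}=\sin b'\phi+(1-t)\sin a'\phi>0$ because both sines are positive and $1-t>0$; a small neighbourhood of $\pi$, where the uniform bound $|\Psi_{p',t}(\phi)-\Psi_{p',t}(\pi)|\le3(\pi-\phi)$ together with $\Psi_{p',t}(\pi)=t\sin(\pi/p')\ge\tfrac12 t\sin(\pi/p)>0$ keeps $\Psi_{p',t}\ge0$ uniformly for $p'$ near $p$; and a compact middle piece, on which $\Psi_{p,t}$ has a positive minimum and hence, by joint continuity, $\Psi_{p',t}>0$ for $p'$ near $p$. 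So $g(p')\le g(p)+\epsilon$ for $p'$ near $p$. (Equivalently, $g(p)=\max_{\phi\in[\phi_0(p),\pi]}\max\{0,\lambda_p(\phi)\}$ is the maximum of a jointly continuous function over a compact interval depending continuously on $p$, hence continuous.) The main obstacle is precisely this upper-semicontinuity step: since $\Psi_{p,t}$ vanishes at $\phi=0$ (and, at $p=1$, also at $\phi=\pi$), it has no uniform positive lower bound on $(0,\pi)$, so the two boundary regions — together with the $p'$-dependence of $\phi_0(p')$ and of $\sin(\pi/p')$ — must be handled separately.
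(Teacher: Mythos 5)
Your argument is correct, and it departs from the paper's proof in the two substantive steps. For monotonicity the paper fixes $t\in[0,1]$ and $\phi\in(0,\pi]$ and shows directly that $p\mapsto\Psi_{p,t}(\phi)=2(1-t)\sin\phi\cos(\phi/p)+t\sin(\phi-\phi/p)$ is increasing on $[1,2]$ (both summands increase with $p$), then reads off that $g$ decreases; your normalization $g(p)=\sup_{(\phi_0,\pi)}\lambda_p$ and the change of variables to $F_p(\theta)$ accomplish the same pointwise comparison, but with the added benefit that locating an interior maximizer $\theta^*$ of $F_{p_2}$ turns the weak inequality into the strict one $g(p_1)\ge F_{p_1}(\theta^*)>F_{p_2}(\theta^*)=g(p_2)$ cleanly --- a point the paper passes over quickly. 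For continuity the paper gives a quantitative two-sided estimate: it takes a zero $\phi_1\in\bigl(p_1\pi/(1+p_1),\pi\bigr)$ of $\Psi_{p_1,g(p_1)}$, sets $c_1=-\sin\bigl((1+1/p_1)\phi_1\bigr)>0$, and derives $g(p_1)-g(p_2)\le 2\bigl(2-g(p_1)\bigr)(p_2-p_1)\phi_1/c_1$; you instead prove lower semicontinuity by passing pointwise limits through the closed convex admissible set of $t$'s, and upper semicontinuity by the three-region compactness argument near $\phi=0$, near $\phi=\pi$, and on the compact middle piece. Your soft argument is longer but more robust (it does not need to track how $c_1$ and $\phi_1$ vary with $p_1$, on which the paper's Lipschitz-type bound quietly depends), while the paper's is shorter and yields an explicit local modulus of continuity. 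The evaluations $g(1)=1$, $g(p)=0$ for $p\ge2$, and $g(3/2)=1/5$ (via $5\Psi_{3/2,1/5}(\phi)=\sin(\phi/3)\bigl(8\sin^2(\phi/3)-5\bigr)^2$) coincide with the paper's computations.
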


\begin{proof}
For $p=1$ we have $\Psi_{1,t}(\phi)=(1-t)\sin 2\phi$, which implies $g(1)=1$.
If $p\ge2$ then $\Psi_{p,0}(\phi)=2\sin\phi\cos\phi/p$ is nonnegative for $\phi\in[0,\pi]$,
which yields $g(p)=0$.

Now observe, that for fixed $t,\phi$, with $0\le t\le1$,
$0<\phi\le\pi$, the function $p\mapsto\Psi_{p,t}(\phi)$ is strictly increasing on $[1,2]$.
Indeed, we can write
\[
\Psi_{p,t}(\phi)=2(1-t)\sin\phi\cos\phi/p+t\sin(\phi-\phi/p)
\]
and if $0<\phi\le\pi$ then both the summands are increasing with $p\in[1,2]$.
This implies, that $g(p)$ is strictly decreasing on $[1,2]$.

To prove continuity of $g$ assume that $1\le p_1<p_2\le2$ and put $t_1:=g(p_1)$, $t_2:=g(p_2)$.
Then $t_1>t_2$, $\Psi_{p_1,t_1}(\phi)\ge0$ for all $\phi\in[0,\pi]$ and there is $\phi_1$, with $p_1\pi/(1+p_1)<\phi_1<\pi$,
such that $\Psi_{p_1,t_1}(\phi_1)=0$.
Then we have that $\Psi_{p_2,t_1}(\phi)>0$ for all $\phi\in(0,\pi]$.
From the third expression in (\ref{psi}) we have that
\[
-c_1:=\sin(1+1/p_1)\phi_1<0.
\]
If we assume that $(p_2-p_1)\phi_1<c_1/2$ then we have
\[
\left|\sin(1+1/p_1)\phi_1-\sin(1+1/p_2)\phi_1\right|\le\left(1/p_1-1/p_2\right)\phi_1<c_1/2
\]
and, consequently, $\sin(1+1/p_2)\phi_1<-c_1/2$.

If we take $t$, with $0\le t<t_1$, then
\[
\Psi_{p_2,t}(\phi_1)=\Psi_{p_2,t}(\phi_1)-\Psi_{p_1,t_1}(\phi_1)
\]
\[
=(1-t_1)\big(\sin(1+1/p_2)\phi_1-\sin(1+1/p_1)\phi_1\big)
+\big(\sin(1-1/p_2)\phi_1-\sin(1-1/p_1)\phi_1\big)
\]
\[
+(t_1-t)\sin(1+1/p_2)\phi_1
\le(2-t_1)(p_2-p_1)\phi_1-(t_1-t)c_1/2.
\]
Hence, if
\[
(2-t_1)(p_2-p_1)\phi_1<(t_1-t)c_1/2
\]
then $\Psi_{p_2,t}(\phi_1)<0$.
This implies that
\[
g(p_1)-g(p_2)=
t_1-t_2\le2(2-t_1)(p_2-p_1)\phi_1/c_1.
\]
and proves continuity of $g$.

For $p=3/2$ we can write
\[
\Psi_{3/2,t}(\phi)
=\frac{\sin\phi/3}{4}\left[(1-t)\left(5-8\sin^2\phi/3\right)^2+5t-1\right].
\]
Note that $\sqrt{5/8}<\sqrt{3}/2=\sin\pi/3$,
so, assuming that $0\le t\le 1$, $\Psi_{3/2,t}$ attains its minimum
on $[0,\pi]$ at $\phi=3\arcsin\sqrt{5/8}$. This yields $g(3/2)=1/5$.
\end{proof}

Now we are able to describe the domain of positive definiteness
of the sequence $a_n(p,t)$, see Fig~\ref{domainofpositivedef}.
The density function for the particular case
$p=3/2$, $t=1/5$ is illustrated in Fig.~\ref{case3215}.

\begin{theorem}
Suppose that $p\ge0$. Then the sequence $a_n(p,t)$ is positive definite
if and only if $p\ge1$ and
\begin{equation}
g(p)\le t\le \frac{2p}{1+p}.
\end{equation}
\end{theorem}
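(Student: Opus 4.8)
The plan is to route everything through the reformulation already proved in the excerpt: for $p>1$, the sequence $a_n(p,t)$ is positive definite if and only if $\Psi_{p,t}(\phi)\ge0$ for all $\phi\in[0,\pi]$. I would first dispose of the non-generic values of $p$. For $0\le p<1$ there is nothing to do: Proposition~\ref{2propositionnecessary} already shows $a_n(p,t)$ cannot be positive definite (it would force $p\le-3$ or $p\ge1$), and this matches the statement, whose clause ``$p\ge1$'' fails. For $p=1$ one uses Example~1: $a_n(1,t)=1+n-nt$ and $a_2(1,t)-a_1(1,t)^2=-(t-1)^2$, so $a_n(1,t)$ is positive definite exactly when $t=1$, which coincides with $g(1)\le t\le 2p/(1+p)$ since $g(1)=1=2\cdot1/(1+1)$.

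For the main range $p>1$, put $S_p:=\{t\in\mathbb{R}:\Psi_{p,t}(\phi)\ge0\ \text{for all }0<\phi<\pi\}$. Since $\Psi_{p,t}(0)=0$ for every $t$ and $\Psi_{p,t}$ is continuous in $\phi$, the requirement ``$\Psi_{p,t}\ge0$ on $[0,\pi]$'' is equivalent to ``$t\in S_p$'', so $a_n(p,t)$ is positive definite iff $t\in S_p$. For each fixed $\phi$ the map $t\mapsto\Psi_{p,t}(\phi)$ is affine, so $\{t:\Psi_{p,t}(\phi)\ge0\}$ is a closed half-line (or all of $\mathbb{R}$), and hence $S_p$, an intersection of such sets, is a closed interval. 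It is nonempty because $\Psi_{p,1}(\phi)=\sin(1-1/p)\phi\ge0$ on $(0,\pi)$, and bounded below because $\Psi_{p,t}(\pi)=t\sin(\pi/p)\ge0$ forces $t\ge0$. Therefore $\min S_p$ is attained and, by (\ref{functiong}), equals $g(p)$. It remains to show $S_p=[g(p),2p/(1+p)]$.

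For the upper endpoint I would differentiate the third form of $\Psi_{p,t}$ in (\ref{psi}): $\Psi_{p,t}'(0)=(1-t)(1+1/p)+(1-1/p)=2-t(p+1)/p=\bigl(2p-t(p+1)\bigr)/p$. If $t>2p/(1+p)$ then $\Psi_{p,t}'(0)<0$, so (using $\Psi_{p,t}(0)=0$) $\Psi_{p,t}(\phi)<0$ for small $\phi>0$ and $t\notin S_p$; together with $\min S_p=g(p)$ this gives $S_p\subseteq[g(p),2p/(1+p)]$. Conversely, $g(p)\in S_p$ (the minimum is attained), and $2p/(1+p)\in S_p$ because Example~3 exhibits $a_n(p,2p/(1+p))=\binom{np}{n}\frac{2}{(np-n+1)(np-n+2)}$ as a positive definite sequence for $p>1$; since $S_p$ is an interval, $[g(p),2p/(1+p)]\subseteq S_p$. (Equivalently, for $g(p)\le t\le 2p/(1+p)$ the sequence $a_n(p,t)$, being affine in $t$, is a convex combination of the positive definite sequences $a_n(p,g(p))$ and $a_n(p,2p/(1+p))$.) Assembling the three ranges of $p$ gives the theorem.

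I do not expect a genuine computational obstacle, since the real analytic work --- the integral representation with density $f_{p,t}$, the auxiliary function $\Psi_{p,t}$, and the equivalence with positive definiteness --- is already carried out in the excerpt. The delicate points are structural rather than computational: one must make sure $S_p$ is literally a closed interval whose minimum is attained and equals $g(p)$, so that ``$t<g(p)$'' genuinely rules out positive definiteness, and one must confirm that the two explicit endpoints lie in $S_p$ --- the endpoint $g(p)$ by unravelling its definition, and the endpoint $2p/(1+p)$ through the factorization into positive definite sequences of Example~3. The derivative-at-$0$ computation, which is what actually produces the threshold $2p/(1+p)$, is immediate once one writes $\Psi_{p,t}(\phi)=(1-t)\sin(1+1/p)\phi+\sin(1-1/p)\phi$.
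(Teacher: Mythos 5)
Your proposal is correct and follows essentially the same route as the paper: the lower endpoint comes straight from the definition of $g(p)$, the upper endpoint $2p/(1+p)$ from Example~3 together with the convex-combination (interval) argument, and values $t>2p/(1+p)$ are excluded by the sign of $\Psi_{p,t}'(0)$ --- which is exactly the paper's stated alternative to observing that $a_n(p,t)<0$ for large $n$. Your treatment is somewhat more careful than the paper's in verifying that the admissible set of $t$ is a closed interval whose minimum is attained, but the logical skeleton is identical.
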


\begin{figure}
\caption{Domain of positive definiteness of the sequence $a_n(p,t)$}
\centering
\includegraphics[width=0.7\textwidth]{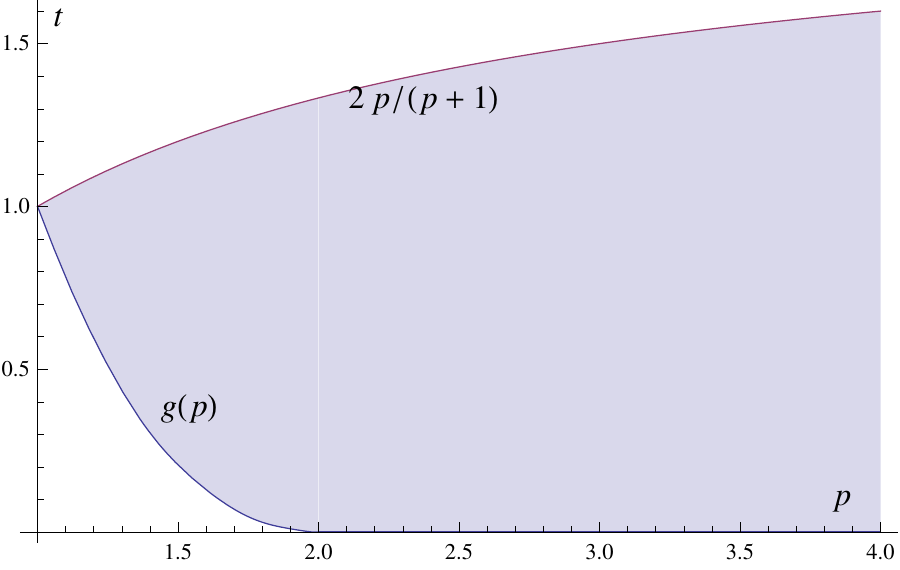}
\label{domainofpositivedef}
\end{figure}

\begin{proof}
Fix $p\ge1$. By the definition of $g(p)$ the sequence $a_n(p,t)$ is positive definite for $t=g(p)$ and
not positive definite for $t<g(p)$.

We have already observed, that for $p\ge1$ the sequence $a_{n}(p,2p/(p+1))$
is positive definite. If $t>2p/(p+1)$ then $n(2p - t - p t)+2<0$ and consequently $a_n(p,t)<0$
for all $n$ sufficiently large. Alternatively, we have $\Psi_{p,t}'(0)=2p-pt-t<0$
in this case, which implies $\Psi_{p,t}(x)<0$ for some $x\in(0,\pi/p)$.
\end{proof}

\section{Free transforms}\label{sectionfree}

Throughout this section we assume that $p\ge1$ and
the sequence $a_{n}(p,t)$ is positive definite, i.e. $g(p)\le t\le 2p/(p+1)$.
Denote by $\eta(p,t)$ the corresponding distribution,
i.e. $\eta(1,1)=\delta_1$ and $\eta(p,t)=f_{p,t}(x)\,dx$ on $[0,p^p (p-1)^{1-p}]$ for $p>1$.
We are going to study relations of these measures with free probability.

Recall that for a compactly supported probability measure $\mu$ on $\mathbb{R}$,
with the moment generating function
\begin{equation}
M_{\mu}(z):=\sum_{n=0}^{\infty} z^n\int_{\mathbb{R}} x^n\,d\mu(x)
=\int_{\mathbb{R}}\frac{1}{1-xz}\,d\mu(x),
\end{equation}
the $S$- and $R$-transforms are defined by
\begin{equation}\label{3stransform}
M_{\mu}\left(\frac{z}{1+z}S_{\mu}(z)\right)=1+z,
\end{equation}
\begin{equation}
1+R_{\mu}\left(z M_{\mu}(z)\right)=M_{\mu}(z).
\end{equation}
Moreover, we have relation
\begin{equation}\label{3rstransformrelation}
R_{\mu}\left(z S_{\mu}(z)\right)=z.
\end{equation}
The coefficients $r_n(\mu)$ in the Taylor expansion
$R_{\mu}(z)=\sum_{n=1}^{\infty} r_n(\mu)z^n$ are called \textit{free cumulants} of $\mu$.
It is known that $\mu$ is infinitely divisible with respect to the additive free
convolution if and only if the sequence $\{r_{n+2}(\mu)\}_{n=0}^{\infty}$
is positive definite, see~\cite{vdn,nicaspeicherlectures}.

For the distributions $\eta(p,t)$ we have
\[
M_{\eta(p,t)}(z):=\sum_{n=0}^{\infty}a_{n}(p,t)z^n=t\mathcal{B}_{p}(z)+(1-t)\mathcal{B}_{p}(z)^2.
\]

Now we are going to compute the $S$-transform of $\eta(p,t)$.

\begin{proposition} For $p>1$, $g(p)\le t\le 2p/(p+1)$ we have
\begin{equation}\label{sfreetransform}
S_{\eta(p,t)}(w)=(2+2w)^{1-p}\frac{\left(\sqrt{(2-t)^2 + 4 (1 - t) w}+t\right)^p}{\sqrt{(2-t)^2 + 4 (1 - t) w}+2-t}.
\end{equation}
\end{proposition}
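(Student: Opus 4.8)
The plan is to run the standard inversion for the $S$-transform, parametrising everything by $y:=\mathcal{B}_p(u)$, where $u:=\frac{w}{1+w}\,S_{\eta(p,t)}(w)$ is the argument occurring in the defining relation~(\ref{3stransform}). By~(\ref{fussrelation}) we then have $u=(y-1)y^{-p}$, and since $M_{\eta(p,t)}(u)=t\mathcal{B}_p(u)+(1-t)\mathcal{B}_p(u)^2$, relation~(\ref{3stransform}) becomes the quadratic equation $ty+(1-t)y^2=1+w$. Solving it and keeping the branch with $y\to1$ as $w\to0$ gives
\[
y=\frac{D-t}{2(1-t)},\qquad D:=\sqrt{(2-t)^2+4(1-t)w}\,.
\]
(If $t=1$ the equation is linear, $y=1+w$, and the asserted formula reduces to $S_{\eta(p,1)}(w)=(1+w)^{1-p}$, the $S$-transform of $\mathcal{B}_p$; so we may assume $t\neq1$.)

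Next I would record the two factorisations
\[
1+w=y\bigl(t+(1-t)y\bigr),\qquad w=(1-t)y^2+ty-1=(y-1)\bigl(1+(1-t)y\bigr),
\]
the second one because $1$ and $-1/(1-t)$ are the roots of $(1-t)y^2+ty-1$. Dividing these and using $u=(y-1)y^{-p}$ collapses the quotient to
\[
S_{\eta(p,t)}(w)=\frac{1+w}{w}\,u=\frac{1+w}{w}\cdot\frac{y-1}{y^p}=\frac{t+(1-t)y}{\bigl(1+(1-t)y\bigr)\,y^{p-1}}\,.
\]

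It then remains to substitute $y=(D-t)/\bigl(2(1-t)\bigr)$, i.e.\ $(1-t)y=(D-t)/2$, which gives $t+(1-t)y=(D+t)/2$ and $1+(1-t)y=(D+2-t)/2$, and to rewrite the factor $y^{1-p}$. For the latter, the identity $D^2-t^2=(2-t)^2-t^2+4(1-t)w=4(1-t)(1+w)$ yields $\frac{2(1-t)}{D-t}=\frac{D+t}{2+2w}$, hence $y^{1-p}=\bigl(\frac{2(1-t)}{D-t}\bigr)^{p-1}=(D+t)^{p-1}(2+2w)^{1-p}$. Assembling the pieces turns the last display into $(2+2w)^{1-p}(D+t)^p/(D+2-t)$, which is~(\ref{sfreetransform}).

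The one place that needs care — and what I would view as the main obstacle, modest as it is — is the choice of branches of the square root and of the fractional powers $y^{p-1}$, $y^{1-p}$: one has to take throughout the analytic functions near $w=0$ singled out by $y=\mathcal{B}_p(u)\to1$ (equivalently $D\to2-t>0$, recalling $t<2$). This is pinned down by noting that both sides of~(\ref{sfreetransform}) are analytic at $w=0$ and agree there, since $S_{\eta(p,t)}(0)=1/a_1(p,t)=1/(2-t)$ equals the right-hand side at $w=0$; the identity then propagates, e.g.\ as an identity of formal power series in $w$. Beyond this bookkeeping the computation is essentially forced once $y$ is introduced.
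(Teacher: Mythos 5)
Your proposal is correct and follows essentially the same route as the paper: parametrize the argument of $M_{\eta(p,t)}$ through the functional equation $\mathcal{B}_p\left(z(1+z)^{-p}\right)=1+z$ (your $y$ is the paper's $1+z$), solve the quadratic $ty+(1-t)y^2=1+w$ choosing the branch with $y\to1$, and assemble via~(\ref{3stransform}). The factorizations you record for $w$ and $1+w$ are precisely the two alternative forms of $z$ and $1+z$ displayed in the paper's proof.
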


\begin{proof}
From (\ref{fussrelation}) we can derive relation
\[\mathcal{B}_{p}\left(z(1+z)^{-p}\right)=1+z,\]
see~\cite{mlotkowski2010}.
Therefore
\[
M_{\eta(p,t)}\left(z(1+z)^{-p}\right)=t (1 + z) + (1 - t) (1 + z)^2.
\]
If we substitute
\[
t (1 + z) + (1 - t) (1 + z)^2 =1 + w
\]
then
\[
z = \frac{\sqrt{(2-t)^2 + 4 (1 - t) w}-2+t}{2 (1 - t)}
= \frac{2w}{\sqrt{(2-t)^2 + 4 (1 - t) w}+2-t}
\]
and
\[
1+z = \frac{\sqrt{(2-t)^2 + 4 (1 - t) w}-t}{2 (1 - t)}
=\frac{2(1+w)}{\sqrt{(2-t)^2 + 4 (1 - t) w}+t},
\]
which combining with (\ref{3stransform}) yields~(\ref{sfreetransform}).
\end{proof}

Now we are going to compute $R$-transform of $\eta(p,t)$ for $p=2$ and $p=3$.
We will denote $r_{n}(p,t):=r_{n}(\eta(p,t))$.

\subsection{The case $p=2$}

The density function $f_{2,t}$ is given by (\ref{2densityfor2}), $0\le t\le 4/3$.
From (\ref{sfreetransform}) we can compute the $R$-transform for $p=2$:

\begin{proposition}
$R_{\eta(2,1)}=z/(1-z)$ and for $t\ne1$
\[
R_{\eta(2,t)}(z)=
\frac{1 - t - 2 z + 3 t z - z^2 + (t-1-z)\sqrt{1+z(2-4t)+z^2}}{2(t-1)}.
\]
Moreover, $\eta(2,t)$ is infinitely divisible with respect to the additive free convolution
if and only if either $t=0$ or $1\le t\le4/3$.
\end{proposition}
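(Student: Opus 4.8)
The plan is to first read off $R_{\eta(2,t)}$ from the $S$-transform (\ref{sfreetransform}), and then decide infinite divisibility via the free-cumulant criterion recalled above. For the $R$-transform, one route is to put $p=2$ in (\ref{sfreetransform}) and invert $z\mapsto zS_{\eta(2,t)}(z)$ using (\ref{3rstransformrelation}); a cleaner one uses the identities $\mathcal{B}_2(z(1+z)^{-2})=1+z$ and $M_{\eta(2,t)}(z(1+z)^{-2})=t(1+z)+(1-t)(1+z)^2$ from the proof of the preceding proposition. Setting $\omega:=z(1+z)^{-1}\big(1+(1-t)z\big)$ one computes $R_{\eta(2,t)}(\omega)=\omega+z\,(1-t+\omega)$, and then eliminates $z$ through the quadratic $(1-t)z^2+(1-\omega)z-\omega=0$, whose branch with $z\to0$ as $\omega\to0$ is $z=\big((\omega-1)+\sqrt{1+(2-4t)\omega+\omega^2}\big)/(2(1-t))$; substituting gives the displayed formula for $t\neq1$, while $R_{\eta(2,1)}(z)=z/(1-z)$ (the free Poisson $R$-transform) is the $t\to1$ limit. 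This is routine algebra.

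\emph{Reduction.} By the criterion recalled above, $\eta(2,t)$ is $\boxplus$-infinitely divisible iff $h_t(z):=z^{-2}\big(R_{\eta(2,t)}(z)-r_1(2,t)\,z\big)=\sum_{n\ge0}r_{n+2}(2,t)\,z^n$ is the moment generating function of a positive finite measure on $\mathbb{R}$, where $r_1(2,t)=a_1(2,t)=2-t$ and $r_2(2,t)=a_2(2,t)-a_1(2,t)^2=1+t-t^2$ by (\ref{2necessarycondition}). Since $\eta(2,t)$ has bounded support, $R_{\eta(2,t)}$, hence $h_t$, is analytic at $0$, so positive definiteness of $\{r_{n+2}(2,t)\}$ would force the representing measure to be compactly supported and $h_t$ to extend analytically to all of $\mathbb{C}\setminus\mathbb{R}$. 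From the explicit form of $R_{\eta(2,t)}$ the apparent pole of $h_t$ at $0$ is removable (the numerator vanishes to second order, as $r_1,r_2$ show), and the only other singularities are the branch points $z_\pm:=(2t-1)\pm2\sqrt{t(t-1)}$ of $\sqrt{1+(2-4t)z+z^2}$, which are genuine because the factor $t-1-z$ does not vanish there (apart from the irrelevant case $t=0$, $z=z_-$).

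\emph{The easy cases.} For $t=0$ one has $R_{\eta(2,0)}(z)=2z+z^2$, the $R$-transform of the semicircle law translated by $2$, so $\{r_{n+2}(2,0)\}=(1,0,0,\dots)$, the moment sequence of $\delta_0$; for $t=1$, $\{r_{n+2}(2,1)\}=(1,1,1,\dots)$, the moment sequence of $\delta_1$; both are positive definite. If $0<t<1$ then $t(t-1)<0$, so $z_\pm=(2t-1)\pm 2i\sqrt{t(1-t)}$ are non-real and $h_t$ has a genuine singularity off the real axis; since the moment generating function of a positive measure on $\mathbb{R}$ has no non-real singularity, $\{r_{n+2}(2,t)\}$ is not positive definite and $\eta(2,t)$ is not infinitely divisible.

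\emph{The main case $1<t\le4/3$.} Here $z_\pm$ are real and positive with $z_-z_+=1$ and $0<z_-<1<z_+$, and $h_t$ is analytic on $\mathbb{C}\setminus[z_-,z_+]$, bounded there, real on $(0,z_-)$, and $\to 0$ at $\infty$. I would run the Stieltjes--Perron inversion for $h_t$ — taking the branch of $\sqrt{1+(2-4t)z+z^2}$ normalized by its value $1$ at $z=0$, which, because $0$ lies to the left of the cut, behaves like $-z$ (not $+z$) at $+\infty$ — to obtain $h_t=M_{\rho_t}$ with $\rho_t$ purely absolutely continuous, supported on $[z_-,z_+]$,
\[
\frac{d\rho_t}{du}=\frac{u\,\big(1-(t-1)u\big)\,\sqrt{(u-z_-)(z_+-u)}}{2\pi\,(t-1)},
\]
and total mass $r_2(2,t)=1+t-t^2$; the identity $h_t=M_{\rho_t}$ follows since both sides are bounded, have the same jump across $[z_-,z_+]$, and agree at $\infty$. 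This density is nonnegative on $[z_-,z_+]$ iff $1-(t-1)u\ge0$ throughout, i.e. iff $z_+\le 1/(t-1)$; by $z_-z_+=1$ this is equivalent to $z_-\ge t-1$, and squaring $(2t-1)-2\sqrt{t(t-1)}\ge t-1$ shows it holds exactly for $t\le4/3$. Thus $\{r_{n+2}(2,t)\}$ is positive definite precisely on $1\le t\le4/3$ inside the admissible interval, which together with the previous paragraph gives the claimed equivalence. The delicate part is precisely this last step: fixing the correct branch in the inversion, identifying the free Lévy measure $\rho_t$ explicitly, and checking that its density changes sign exactly at the endpoint $t=4/3=2p/(p+1)$, the same value at which $a_n(2,t)$ ceases to be positive definite.
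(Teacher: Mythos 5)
Your derivation of $R_{\eta(2,t)}$ is correct and essentially equivalent to the paper's (the paper solves $S_{\eta(2,t)}\bigl(R_{\eta(2,t)}(z)\bigr)R_{\eta(2,t)}(z)=z$, you eliminate the uniformizing variable from $M_{\eta(2,t)}(z(1+z)^{-2})$ directly; same algebra). For the main case $1<t\le 4/3$ your strategy is also the paper's: invert the shifted $R$-transform by Stieltjes--Perron and exhibit the cumulants as moments of an explicit positive measure on $[z_-,z_+]$. For $0<t<1$ you argue via the non-real branch points $z_\pm$ together with determinacy of the moment problem; this is valid (the branch points are genuine since $t-1-z_\pm\ne0$ there, and an exponentially bounded positive definite sequence has a unique compactly supported representing measure whose generating function is analytic off $\mathbb{R}$), but it is considerably heavier than the paper's one-line check that $r_2(2,t)r_4(2,t)-r_3(2,t)^2=t^2(t-1)(t-2)(t^2-2)<0$ for $0<t<1$.

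There is, however, a concrete error in your central formula: the density of $\rho_t$ is off by a factor of $u^2$. Carrying out the inversion you describe (with your correct branch normalization, so that $\sqrt{1+(2-4t)y+y^2}\,\big|_{y-i0}=i\sqrt{(y-z_-)(z_+-y)}$ for $y\in(z_-,z_+)$, and using $z_-z_+=1$ when passing between the variable of $R$ and that of the moment generating function) yields
\[
\frac{d\rho_t}{du}=\frac{\bigl(1-(t-1)u\bigr)\sqrt{(u-z_-)(z_+-u)}}{2\pi(t-1)\,u},\qquad u\in[z_-,z_+],
\]
which is exactly $u^2$ times the density of the probability measure in (\ref{3integral222}), as it must be. Its total mass is $1-t(t-1)=1+t-t^2=r_2(2,t)$, by the identities $\int_{z_-}^{z_+}u^{-1}\sqrt{(u-z_-)(z_+-u)}\,du=2\pi(t-1)$ and $\int_{z_-}^{z_+}\sqrt{(u-z_-)(z_+-u)}\,du=2\pi t(t-1)$. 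The density you wrote, $u\bigl(1-(t-1)u\bigr)\sqrt{(u-z_-)(z_+-u)}/\bigl(2\pi(t-1)\bigr)$, does \emph{not} have total mass $r_2(2,t)$: at $t=4/3$ it integrates to $64/81$, whereas $r_2(2,4/3)=5/9$, so the mass verification you invoke fails for the formula as written. Because the spurious factor $u^2$ is positive on $[z_-,z_+]\subset(0,\infty)$, your sign analysis and hence the final conclusion survive unchanged, but the formula and its claimed check must be corrected.
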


\begin{proof}
First we find $R_{\eta(2,t)}(z)$ by solving equation $S_{\eta(2,t)}\left(R_{\eta(2,t)}(z)\right) R_{\eta(2,t)}(z)=z$,
equivalent with (\ref{3rstransformrelation}), with the condition $R_{\eta(2,t)}(0)=0$.
In particular $R_{2,0}=2z+z^2$, which implies that $\eta(2,0)$ is infinitely divisible
with respect to the additive free convolution.

Now we can find:
\begin{align*}
r_1(2,t)&=2 - t,\\
r_2(2,t)&=1 + t - t^2,\\
r_3(2,t)&=3 t^2 - 2 t^3,\\
r_4(2,t)&=-4 t^2 + 10 t^3 - 5 t^4.
\end{align*}
Since
\[
r_2(2,t)r_4(2,t)-r_3(2,t)^2=t^2(t-1) (t-2)(t^2-2),
\]
for $0<t<1$ the distribution $\eta(2,t)$ is not infinitely divisible
with respect to the additive free convolution.

For $t\ne 1$ we have
\[
1+R_{\eta(2,t)}(z)=\frac{t - 1 - 2 z + 3 t z - z^2 + (t-1-z)\sqrt{1+z(2-4t)+z^2}}{2(t-1)}
\]
and $1+R_{\eta(2,1)}(z)=1/(1-z)$.
Then for $1<t\le3/2$ the function
\[
\frac{1+R_{\eta(2,t)}(1/z)}{z}
=\frac{(t-1)z^2 - 2 z + 3 t z -1 + \big(z(t-1)-1\big)\sqrt{1+z(2-4t)+z^2}}{2(t-1)z^3}
\]
is the Cauchy transform of the probability distribution
\[
\frac{(1-tx+x)\sqrt{4t(t-1)-(x-2t+1)^2}}{2\pi(t-1)x^3}\,dx,
\]
on the interval
\[
x\in\left[2t-1-2\sqrt{t^2-t},2t-1+2\sqrt{t^2-t}\right].
\]
Therefore for $1< t\le 4/3$
\begin{equation}\label{3integral222}
r_n(2,t)=\int_{2t-1-2\sqrt{t^2-t}}^{2t-1+2\sqrt{t^2-t}}
x^n\frac{(1-tx+x)\sqrt{4t(t-1)-(x-2t+1)^2}}{2\pi(t-1)x^3}\,dx,
\end{equation}
which proves that
the sequence $\{r_{n+2}(2,t)\}_{n=0}^{\infty}$
is positive definite.
\end{proof}

\textbf{Remark.}
Note, that for $\eta(2,0)$ the cumulant sequence is $(2,1,0,0,\ldots)$,
so the sequence $\{r_{n+2}(2,0)\}_{n=0}^{\infty}=(1,0,0,\ldots)$ is positive definite.
Actually, $\eta(2,0)$,
given by (\ref{eta20}),
is a translation of the Wigner semicircle distribution
$\frac{1}{2\pi}\sqrt{4-x^2}\,dx$, $x\in[-2,2]$. The free additive infinite divisibility
of $\eta(2,0)$ was overlooked in~\cite{mlotkowskipensonbinomial},
Corollary~7.1, where $\eta(2,0)$ was denoted $\mu(2,2)$.

\textbf{Example 1.}
Define a sequence $a_n$ by $a_{0}:=1$ and $a_n:=3^{n}\cdot r_n(2,4/3)$ for $n\ge1$:
\[
1, 2, 5, 16, 64, 304, 1632, 9552, 59520, 388720, 2632864,\ldots.
\]
Applying (\ref{3integral222}) for $t=4/3$ we obtain
\begin{equation}\label{3example2aint}
a_n=\int_{1}^{9} x^n\frac{\sqrt{(x-1)(9-x)^3}}{2\pi x^3}\,dx.
\end{equation}
Its generating function is
\begin{equation}\label{3example2agenf}
\sum_{n=0}^{\infty}a_n z^n=
1+R_{\eta(2,4/3)}(3z)=\frac{1+18z-27z^2+\sqrt{(1-z)(1-9z)^3}}{2}.
\end{equation}

\textbf{Example 2.}
Now let us consider the binomial transform of $a_n$:
\[
b_n:=\sum_{k=0}^{n}(-1)^{n-k}\binom{n}{k}a_k.
\]
The corresponding density function is that of the sequence $a_n$
translated by $-1$, so
\begin{equation}
b_n=\int_{0}^{8} x^n\frac{\sqrt{x(8-x)^3}}{2\pi (x+1)^3}\,dx.
\end{equation}
For the generating function we have
\[
\sum_{n=0}^{\infty}b_n z^n
=\sum_{k=0}^{\infty} a_k(-1)^k\sum_{n=k}^{\infty}\binom{n}{k}(-z)^n
\]
\[
=\sum_{k=0}^{\infty}a_k \frac{z^k}{(1+z)^{k+1}}
=\frac{1}{1+z}\left(1+R_{\eta(2,4/3)}\left({3z}/{(1+z)}\right)\right),
\]
so from (\ref{3example2agenf})
\begin{equation}\label{3avoiding2genf}
\sum_{n=0}^{\infty}b_n z^n=\frac{1+20z-8z^2+\sqrt{(1-8z)^3}}{2(1+z)^3}.
\end{equation}
This proves that $b_n$ coincides with $A022558$ in OEIS:
\[
1, 1, 2, 6, 23, 103, 512, 2740, 15485, 91245, 555662,\ldots,
\]
which counts the permutations of length $n$ which
avoid the pattern $1342$, see Theorem~2 in~\cite{bona1997}.

\subsection{The case $p=3$}
\begin{proposition}
\begin{equation}
R_{\eta(3,t)}(z)=\frac{
z(4 - 7 t + 4 t^2 - 2 z)-(t-1)^2 + (1-2t+t^2-tz)\sqrt{1-4tz}}{2 (t + z-1)^2}
\end{equation}
and the distribution $\eta(3,t)$ is infinitely
divisible with respect to the additive free convolution if and only if $1/2\le t\le 3/2$.
\end{proposition}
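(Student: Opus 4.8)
\emph{Step 1: the $R$-transform.} By (\ref{3rstransformrelation}), $w:=R_{\eta(3,t)}(z)$ is characterised by $w\,S_{\eta(3,t)}(w)=z$ together with $R_{\eta(3,t)}(0)=0$. Specialising (\ref{sfreetransform}) to $p=3$ and putting $s:=\sqrt{(2-t)^2+4(1-t)w}$, so that $w=\frac{s^2-(2-t)^2}{4(1-t)}$ and $2+2w=\frac{(s-t)(s+t)}{2(1-t)}$, the equation $w\,S_{\eta(3,t)}(w)=z$ collapses to the rational identity
\[
z=\frac{(1-t)(s+t-2)(s+t)}{(s-t)^2},
\]
a quadratic equation for $s$. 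Its discriminant turns out to equal $(t-1)^2(1-4tz)$; solving and selecting the root with $s|_{z=0}=2-t$ (forced by $R_{\eta(3,t)}(0)=0$) gives $s=\dfrac{zt-(t-1)^2+(t-1)\sqrt{1-4tz}}{z+t-1}$, and back-substitution into $R_{\eta(3,t)}=\frac{s^2-(2-t)^2}{4(1-t)}$ reduces, after simplification, to the stated formula. At the limiting value $t=1$ this reads $R_{\eta(3,1)}(z)=\dfrac{1-2z-\sqrt{1-4z}}{2z}=z\,\mathcal{B}_2(z)^2$, so $r_n(3,1)$ is the $n$-th Catalan number.

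\emph{Step 2: reduction of infinite divisibility to an analytic condition.} By the stated criterion, $\eta(3,t)$ is $\boxplus$-infinitely divisible iff $\{r_{n+2}(3,t)\}_{n\ge0}$ is positive definite. Since $R_{\eta(3,t)}$ is analytic at $0$, the free cumulants grow at most geometrically, so that sequence---if positive definite---is a determinate Hamburger moment sequence with a unique, compactly supported representing measure; hence its positive definiteness is equivalent to the function $wR_{\eta(3,t)}(1/w)-(2-t)=\sum_{n\ge0}r_{n+2}(3,t)w^{-n-1}$ being the Cauchy transform of a positive measure. Substituting the formula from Step 1 and abbreviating $h_t(w):=\frac{1+R_{\eta(3,t)}(1/w)}{w}$ (which equals $w^{-2}\big(wR_{\eta(3,t)}(1/w)-(2-t)\big)$ up to a polynomial, hence has the same singularities off $w=0$), one obtains
\[
h_t(w)=\frac{(t-1)^2w^2+t(4t-3)w+\big((t-1)^2w-t\big)\sqrt{w^2-4tw}}{2\,w\,\big(1-(1-t)w\big)^2}=:\frac{N(w)}{2\,w\,\big(1-(1-t)w\big)^2},
\]
where $\sqrt{w^2-4tw}=\sqrt{w(w-4t)}$ is the branch analytic off $[0,4t]$ with $\sqrt{w^2-4tw}\sim w$ at $\infty$. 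The only possible singularities of $h_t$ are the cut $[0,4t]$ and the point $w_0:=1/(1-t)$; for $t\in[0,3/2]$ one has $w_0>4t$ if $t<1$ and $w_0<0$ if $t>1$, so $w_0\notin[0,4t]$ except at $t=1/2$, where $w_0=4t=2$.

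\emph{Step 3: necessity.} For $0\le t<1/2$ a direct computation gives $N(w_0)=\dfrac{2(2t-1)^2}{1-t}\ne0$, so $h_t$---and therefore $wR_{\eta(3,t)}(1/w)-(2-t)$---has a genuine double pole at $w_0\notin[0,4t]$. Since the Cauchy transform of a finite positive measure has at worst simple poles, Step 2 shows that $\{r_{n+2}(3,t)\}$ is not positive definite, so $\eta(3,t)$ is not $\boxplus$-infinitely divisible. (For instance $h_0(w)=\dfrac{w}{(w-1)^2}$, whose double pole at $w=1$ reflects that $\{r_{n+2}(3,0)\}=(3,4,5,\dots)$ is not positive definite.)

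\emph{Step 4: sufficiency and endpoints.} For $1/2<t<3/2$ one checks that $N(w_0)=N'(w_0)=0$, so the would-be double pole of $h_t$ at $w_0$ cancels; then $h_t$ is analytic on $\mathbb{C}\setminus[0,4t]$ with $h_t(w)\sim1/w$ at $\infty$, and Stieltjes inversion across the cut (only $\sqrt{w^2-4tw}$ contributes an imaginary part there) identifies $h_t$ as the Cauchy transform of the probability measure $f_t(x)\,dx$, $x\in(0,4t)$, with
\[
f_t(x)=\frac{\big(t-(t-1)^2x\big)\sqrt{4tx-x^2}}{2\pi\,x\,\big(1-(1-t)x\big)^2},
\]
the branch sign being pinned down by the $x^{-1/2}$ behaviour at $x=0$. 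As $1-(1-t)x$ vanishes only at $x=w_0\notin(0,4t)$ and $t-(t-1)^2x>0$ on $(0,4t)$ precisely when $4(t-1)^2<1$, i.e. for $1/2<t<3/2$, the density $f_t$ is nonnegative; hence $r_n(3,t)=\int_0^{4t}x^nf_t(x)\,dx$ for all $n\ge1$, so $\{r_{n+2}(3,t)\}$ is the moment sequence of $x^2f_t(x)\,dx$ and is positive definite. Since positive definiteness of $\{r_{n+2}(3,t)\}$ is a closed condition in $t$ (the Hankel minors are polynomials in $t$), the conclusion persists at $t=1/2$ and $t=3/2$, which together with Step 3 gives the asserted equivalence. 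I expect the genuine work to lie in the two factorisation facts (the discriminant $(t-1)^2(1-4tz)$ in Step 1, and the order-two vanishing of $N$ at $w_0$ exactly for $t\in[1/2,3/2]$), plus the degenerate parameters: $t=1$, where $w_0=\infty$ and $h_1$ is the Cauchy transform of the Marchenko--Pastur law, and $t=1/2$, where $w_0=4t$ merges with the endpoint of the cut and the representing measure becomes $\frac{1}{2\pi\sqrt{x(2-x)}}\,dx+\tfrac12\delta_2$.
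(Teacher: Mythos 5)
Your derivation of $R_{\eta(3,t)}$ and your sufficiency argument follow the paper's route exactly: solve $wS_{\eta(3,t)}(w)=z$ for the root with $R(0)=0$, then recognise $w^{-1}\bigl(1+R_{\eta(3,t)}(1/w)\bigr)$ as the Cauchy transform of the explicit density $\frac{(t-(t-1)^2x)\sqrt{4tx-x^2}}{2\pi x(1-(1-t)x)^2}$ on $[0,4t]$, so that $\{r_{n+2}(3,t)\}$ becomes the moment sequence of $x^2f_t(x)\,dx$ — this is precisely the paper's density (\ref{3cumulantsmoments3t}) and the analogue of its integral representation (\ref{3integral222}) for $p=2$. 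Where you genuinely diverge is the necessity direction. The paper merely remarks that the density ``is positive provided $1/2\le t\le3/2$'' (and, in the $p=2$ case, checks a Hankel minor $r_2r_4-r_3^2<0$); you instead observe that for $0\le t<1/2$ the factor $N(w)$ does not vanish at $w_0=1/(1-t)\notin[0,4t]$, so the cumulant generating function retains a genuine \emph{double} pole off the cut, which is incompatible with being a Cauchy transform of a positive measure (by the bound $|G(x_0+iy)|\le\|\mu\|/y$), given determinacy of a geometrically bounded positive definite sequence. This is a cleaner and in fact more honest argument: for $t<1/2$ the paper's formula is not literally the moment generating function of that density alone (the uncancelled double pole contributes a derivative-of-delta term, and indeed $r_n$ grows like $n(1-t)^{-n}$ there, faster than $(4t)^n$), so ``the density goes negative'' does not by itself close the necessity direction, whereas your pole argument does. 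Your explicit treatment of the degenerate parameters ($t=1$, where $w_0=\infty$, and $t=1/2,3/2$ via closedness of positive semidefiniteness in $t$, the cumulants being polynomials in $t$) is also more careful than the paper's. The only computations you leave as ``to be checked'' — the discriminant $(t-1)^2(1-4tz)$ and the order-two vanishing of $N$ at $w_0$ for $1/2<t<3/2$ — are routine and do verify.
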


\begin{proof}
The proof is similar as for $p=2$.
First we find $R_{\eta(3,t)}$ by solving the equation
\[
S_{\eta(3,t)}\left(R_{\eta(3,t)}(z)\right) R_{\eta(3,t)}(z)=z,
\]
with the condition that $R_{\eta(3,t)}(0)=0$.
Then we find out that
\[
1+R_{\eta(3,t)}(z)=\frac{
(t-1)^2 +tz(4t-3) + (1-2t+t^2-tz)\sqrt{1-4tz}}{2 (t + z-1)^2}
\]
is the moment generating function for
the density
\begin{equation}\label{3cumulantsmoments3t}
\frac{\left(t-x(t-1)^2\right)\sqrt{4t-x}}{2\pi (tx-x+1)^2\sqrt{x}},\qquad x\in[0,4t],
\end{equation}
which is positive provided $1/2\le t\le 3/2$.
\end{proof}

\textbf{Example.}
The sequence $a_n={A220910}(n)$:
\[
1, 1, 3, 14, 83, 570, 4318, 35068, 299907, 2668994, 24513578,\ldots
\]
counts matchings avoiding the pattern $231$,
see \cite{bloomelizalde} for details.
Its generating function equals
\begin{equation}
M(z)=\sum_{n=0}^{\infty}a_n z^n=\frac{1+36z+\sqrt{(1-12z)^3}}{2(1+4z)^2}=1+R_{\eta(3,3/2)}(2z),
\end{equation}
so we have $a_n=2^n\cdot r_{n}(3,3/2)$ for $n\ge1$.
Therefore these numbers can be represented as moments:
\begin{equation}\label{3moments}
a_n=\int_{0}^{12}x^n\frac{\sqrt{(12-x)^3}}{2\pi(x+4)^2\sqrt{x}}\,dx.
\end{equation}

Now we are going to prove a recurrence relation,
which was was conjectured by R.~J.~Mathar
(see OEIS, entry {A220910}, Aug. 04 2013).

\begin{proposition}
For $n\ge2$ we have
\begin{equation}\label{3recurrence}
n a_{n} =(8n-34)a_{n-1} +24(2n-3)a_{n-2}.
\end{equation}
\end{proposition}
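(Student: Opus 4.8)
The plan is to derive the recurrence from the explicit generating function $M(z)=\sum_{n\ge0}a_nz^n=\dfrac{1+36z+\sqrt{(1-12z)^3}}{2(1+4z)^2}$, which was established just above. The strategy is to clear denominators and the square root to obtain a polynomial (in $z$) differential equation satisfied by $M$, and then read off the recurrence by comparing coefficients of $z^n$.

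First I would isolate the radical. Writing $2(1+4z)^2 M(z)-1-36z = \sqrt{(1-12z)^3}$ and squaring gives
\[
\bigl(2(1+4z)^2 M - 1 - 36z\bigr)^2 = (1-12z)^3,
\]
a polynomial identity relating $M$ and $M^2$ with polynomial coefficients in $z$. This already shows $M$ is algebraic, but to get a linear recurrence with polynomial coefficients it is cleanest to convert to a linear ODE. Differentiating the squared identity (or, equivalently, differentiating $M$ directly and using $\frac{d}{dz}\sqrt{(1-12z)^3} = -18\sqrt{1-12z}$ together with the expression for $\sqrt{(1-12z)^3}$ in terms of $M$) will produce a first-order linear ODE of the form
\[
q_1(z)\,M'(z) = q_0(z)\,M(z) + q_{-1}(z)
\]
with explicit polynomials $q_1,q_0,q_{-1}$; concretely one expects $q_1(z)$ to carry the factors $(1+4z)$ and $(1-12z)$ coming from the denominator and the branch point. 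Substituting $M=\sum a_n z^n$, $M'=\sum n a_n z^{n-1}$ and comparing the coefficient of $z^{n-1}$ (for $n\ge2$, so that the inhomogeneous term $q_{-1}$ no longer contributes) yields a linear relation among $a_n,a_{n-1},a_{n-2}$; after clearing the overall constant this should be exactly $n a_n = (8n-34)a_{n-1} + 24(2n-3)a_{n-2}$.

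The only real obstacle is bookkeeping: one must make sure the ODE has order low enough (first order suffices here because $M$ is built from a single radical over a rational function) and that the polynomial coefficients have degree at most $2$ in $z$, since otherwise the recurrence would involve $a_{n-3}$ as well; if an extra term appears it can be eliminated using the recurrence one index down, or by choosing the ODE more carefully (multiplying through by an appropriate linear factor). As a safeguard I would verify the resulting recurrence against the listed initial values $a_0=1,a_1=1,a_2=3,a_3=14,a_4=83$, which pins down the constants unambiguously. Finally, since the recurrence and these initial conditions determine the sequence uniquely, agreement on enough terms together with the derived ODE completes the proof.
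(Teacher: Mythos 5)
Your approach is essentially the paper's: the authors likewise establish the first-order linear ODE $(1-8z-48z^2)M'(z)+(26-24z)M(z)=27$ (note $1-8z-48z^2=(1+4z)(1-12z)$, exactly the factors you predicted) and read off the recurrence from the coefficient of $z^{n-1}$, with the constant inhomogeneous term dropping out for $n\ge2$. Your worry about the coefficient degrees is unfounded here --- they are indeed at most $2$ --- so the plan goes through as described.
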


\begin{proof}
One can check that the generating function satisfies differential equation:
\[
(1-8z-48z^2)M'(z)+(26-24z)M(z)=27.
\]
The coefficient at $z^{n-1}$ on the left hand side is equal to
\[
n a_{n } - 8 (n-1) a_{n-1} - 48 (n - 2)a_{n - 2} +
 26 a_{n-1} - 24a_{n - 2}
\]
for $n\ge2$, which gives (\ref{3recurrence}).
\end{proof}

Now we will provide two formulas for~$a_n={A220910}(n)$.

\begin{proposition}
\begin{equation}\label{3avoiding3suma}
a_n=\frac{1-8n}{2}(-4)^{n}
+\binom{2n}{n}\sum_{k=0}^{n}\frac{3^{n+1}(k+1)\prod_{i=0}^{k-1}{(n-i)}}{8(-3)^k\prod_{i=0}^{k+1}{(n-i-1/2)}}
\end{equation}
\begin{equation}\label{3avoiding3sumb}
=\frac{(-4)^n(1-8n)}{16}
\left[
8-\sum_{k=0}^{n+1}\frac{(-3)^k}{k!}\prod_{i=0}^{k-1}(i-3/2)
\right]
+\binom{2n}{n}\frac{3^{n+3}}{32(n+1)}.
\end{equation}
\end{proposition}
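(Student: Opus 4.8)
The plan is to read both identities directly off the explicit generating function
\[
M(z)=\frac{1+36z+(1-12z)^{3/2}}{2(1+4z)^{2}}=\frac{1+36z}{2(1+4z)^{2}}+\frac{(1-12z)^{3/2}}{2(1+4z)^{2}}
\]
established above. Since $(1+4z)^{-2}=\sum_{m\ge0}(m+1)(-4)^{m}z^{m}$, the coefficient of $z^{n}$ in the rational summand is $\tfrac12(1-8n)(-4)^{n}$ for every $n\ge 0$; this is exactly the leading term of (\ref{3avoiding3suma}) and, after distributing the $8$, the leading term of (\ref{3avoiding3sumb}). Everything therefore reduces to $b_{n}:=\tfrac12[z^{n}]\,(1-12z)^{3/2}(1+4z)^{-2}$. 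Writing $(-12)^{j}=3^{j}(-4)^{j}$ in $(1-12z)^{3/2}=\sum_{j\ge0}\binom{3/2}{j}(-12z)^{j}$ and taking the Cauchy product with $(1+4z)^{-2}$ gives the single clean expression
\[
b_{n}=\frac{(-4)^{n}}{2}\sum_{j=0}^{n}\binom{3/2}{j}\,3^{j}(n+1-j),
\]
and the two displayed formulas are just two ways of rewriting this sum.

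For (\ref{3avoiding3suma}) I substitute $j=n-k$, so $b_{n}=\frac{(-4)^{n}}{2}\sum_{k=0}^{n}(k+1)\,3^{\,n-k}\binom{3/2}{n-k}$, and then verify the term-by-term identity
\[
\binom{2n}{n}\,\frac{(-1)^{k}\prod_{i=0}^{k-1}(n-i)}{\prod_{i=0}^{k+1}(n-i-1/2)}=\frac{4(-4)^{n}}{3}\binom{3/2}{n-k},\qquad 0\le k\le n.
\]
The quickest proof is that both sides, as functions of $k$, obey the same recursion: using $\prod_{i=0}^{k-1}(n-i)=n!/(n-k)!$ one computes that the ratio of consecutive values of the left side is $-(n-k+1)/(n-k-\tfrac32)$, which is precisely $\binom{3/2}{n-k}/\binom{3/2}{n-k+1}$; and at $k=n$ the left side collapses, via $(n-\tfrac12)(n-\tfrac32)\cdots\tfrac12=\frac{(2n)!}{4^{n}n!}$, to $\frac{4(-4)^{n}}{3}=\frac{4(-4)^{n}}{3}\binom{3/2}{0}$. (Equivalently, expand $\binom{3/2}{n-k}$ as a ratio of $\Gamma$-values and apply the duplication and reflection formulas.) Multiplying this identity by $\frac{3^{\,n+1-k}(k+1)}{8}$ and summing over $k$ yields (\ref{3avoiding3suma}), the only bookkeeping being $(-1)^{k}(-1)^{k}=1$ and $3^{\,n+1-k}/6=3^{\,n-k}/2$.

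For (\ref{3avoiding3sumb}) I write $\sum_{j=0}^{n}\binom{3/2}{j}3^{j}(n+1-j)=(n+1)S_{n}-T_{n}$ with $S_{n}=\sum_{j\le n}\binom{3/2}{j}3^{j}$ and $T_{n}=\sum_{j\le n}j\binom{3/2}{j}3^{j}$. From $j\binom{3/2}{j}=\tfrac32\binom{1/2}{j-1}$ and Pascal's rule $\binom{3/2}{j}=\binom{1/2}{j}+\binom{1/2}{j-1}$, both sums telescope to $U_{n}=\sum_{i=0}^{n-1}\binom{1/2}{i}3^{i}$, giving $S_{n}=4U_{n}+\binom{1/2}{n}3^{n}$ and $T_{n}=\tfrac92U_{n}$. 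The right side of (\ref{3avoiding3sumb}) contains $\sum_{k\le n+1}\binom{3/2}{k}3^{k}$, which equals $S_{n+1}=4U_{n}+4\binom{1/2}{n}3^{n}+\binom{1/2}{n+1}3^{n+1}$ (indeed $\binom{3/2}{k}3^{k}=\frac{(-3)^{k}}{k!}\prod_{i=0}^{k-1}(i-\tfrac32)$, so that sum is a truncation of $(1+3)^{3/2}=8$). The $U_{n}$ contributions then cancel, and using $\binom{1/2}{n}=\frac{(-1)^{n-1}}{4^{n}(2n-1)}\binom{2n}{n}$, $\binom{1/2}{n+1}=-\frac{2n-1}{2(n+1)}\binom{1/2}{n}$ and $\binom{2n}{n}/(n+1)=-\frac{2n-1}{n+1}(-4)^{n}\binom{1/2}{n}$, the required equality collapses, after clearing denominators by $(-4)^{n}3^{n}\binom{1/2}{n}$, to $8(8n-1)(n+1)-3(8n-1)(2n-1)-27(2n-1)=16(n+1)^{2}$, which is immediate.

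The argument is mechanical once $M(z)$ is in hand; the only points requiring care are the endpoint $n=0$ (where one must read the empty products, $U_{0}=0$ and $C_{0}=1$ correctly) and the sign bookkeeping through the reindexing $j=n-k$ and the reflection formula in the proof of (\ref{3avoiding3suma}) — that is the part most likely to produce slips. As an independent check one may instead verify that each closed form satisfies the recursion (\ref{3recurrence}) with $a_{0}=a_{1}=1$, though that route is more laborious.
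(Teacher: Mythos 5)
Your proof is correct, and it takes a genuinely different route from the paper. The paper starts from the integral representation (\ref{3moments}), evaluates it as $\frac{27(2n)!3^n}{8n!(n+2)!}\,{}_2F_1(2,n+1/2;n+3\,|-3)$, and then derives (\ref{3avoiding3suma}) from the hypergeometric connection formula (15.8.2) of the NIST handbook and (\ref{3avoiding3sumb}) from the contiguous relation (15.5.11) together with a closed form for ${}_2F_1(1,b;m+1|z)$ taken from Prudnikov et al. You instead bypass the special-function machinery entirely: you extract $[z^n]$ from the closed form $M(z)=\frac{1+36z+(1-12z)^{3/2}}{2(1+4z)^2}$ by a Cauchy product, obtaining $b_n=\frac{(-4)^n}{2}\sum_{j=0}^n\binom{3/2}{j}3^j(n+1-j)$, and then (i) prove the term identity matching this to the summand of (\ref{3avoiding3suma}) by checking that both sides satisfy the same first-order recursion in $k$ and agree at $k=n$ (valid since all the half-integer factors and $\binom{3/2}{m}$ are nonzero), and (ii) reduce (\ref{3avoiding3sumb}) to the polynomial identity $(8n-1)\bigl(8(n+1)-3(2n-1)\bigr)-27(2n-1)=16(n+1)^2$ via Pascal's rule $\binom{3/2}{j}=\binom{1/2}{j}+\binom{1/2}{j-1}$ and the telescoping through $U_n$. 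I verified the key computations (the $U_n$-coefficients $\frac{(-4)^n(8n-1)}{4}$ on both sides cancel exactly, and the final quadratic identity holds), so the argument is complete. What the paper's route buys is that the shape of (\ref{3avoiding3suma}) falls out automatically as the literal terms of ${}_2F_1(2,-n;5/2-n\,|-1/3)$; what your route buys is a fully elementary, self-contained proof that does not lean on handbook formulas, at the cost of having to reverse-engineer and verify the term identity. Your closing remark about the $n=0$ edge cases is the right caution, and they do check out.
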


\begin{proof}
Putting $x=12t$ in (\ref{3moments}) and applying formula (15.6.1) from \cite{olver} we get
\begin{align}
a_{n}&=\frac{9\cdot 12^n}{2\pi}\int_{0}^{1}\frac{t^{n-1/2}(1-t)^{3/2}}{(1+3t)^2}\,dt\\
&=\frac{27(2n)!3^n}{8n!(n+2)!}\,{}_{2}F_{1}\!\left(\left.2,n+1/2;\,n+3\,\right|-3\right).
\label{3avoiding3hypergeo}
\end{align}

From (15.8.2) in \cite{olver} and from the identities
\[\frac{\Gamma(n-3/2)}{\Gamma(n+1/2)}=\frac{4}{(2n-3)(2n-1)},\qquad
\frac{\Gamma(3/2-n)}{\Gamma(5/2)}=\frac{(-2)^{n+1}(2n-1)}{3(2n-1)!!},
\]
we have
\[
{}_{2}F_{1}\!\left(\left.2,n+1/2;\,n+3\,\right|-3\right)
=\frac{4(n+2)!}{9n!(2n-1)(2n-3)}\,{}_{2}F_{1}\!\left(\left.2,-n;\,5/2-n\,\right|-1/3\right)
\]
\[
+\frac{(-2)^{n+1}(n+2)!(2n-1)}{3^{n+3/2}(2n-1)!!}\,{}_{2}F_{1}\!\left(\left.n+1/2,-3/2;\,n-1/2\,\right|-1/3\right).
\]
Since
\[
{}_{2}F_{1}\!\left(\left.2,-n;\,5/2-n\,\right|z\right)
=\sum_{k=0}^{n}(k+1)z^k\prod_{i=0}^{k-1}\frac{n-i}{n-5/2-i}
\]
and
\[
{}_2F_{1}\!\left(\left.n+1/2,-3/2;\,n-1/2\,\right|z\right)=\frac{(2n-2nz-2z-1)\sqrt{1-z}}{2n-1}
\]
(see formula (15.4.9) in \cite{olver}), we obtain
\[
{}_{2}F_{1}\!\left(\left.2,n+1/2;\,n+3\,\right|-3\right)
=\frac{n!(n+2)!(8n-1)(-4)^{n+1}}{(2n)!3^{n+3}}
\]
\[
+\frac{4(n+1)(n+2)}{9(2n-1)(2n-3)}\sum_{k=0}^{n}\frac{k+1}{(-3)^k}\prod_{i=0}^{k-1}\frac{n-i}{n-5/2-i},
\]
which leads to~(\ref{3avoiding3suma}).

For the second formula we apply the identity
\[
{}_{2}F_{1}\!\left(\left.2,b;c\,\right|z\right)(1-z)
=(bz-z-c+2)\,{}_{2}F_{1}\!\left(\left.1,b;c\,\right|z\right)
+c-1,
\]
see (15.5.11) in \cite{olver}, to (\ref{3avoiding3hypergeo})
and get
\[
_{2}F_{1}\!\left(\left.2,n+1/2;\,n+3\,\right|-3\right)=
\frac{1-8n}{8}\,{}_{2}F_{1}\!\left(\left.1,n+1/2;\,n+3\,\right|-3\right)+\frac{n+2}{4}.
\]

Applying formula (123), page 462, from \cite{prudnikov3}:
\[
{}_{2}F_{1}\!\left(\left.1,b;\,m+1\,\right|z\right)=\frac{m!}{z^m(b-1)\ldots(b-m)}
\left((1-z)^{m-b}-\sum_{k=0}^{m-1}\frac{z^k}{k!}\prod_{i=0}^{k-1}(b+i-m)\right),
\]
with $b=n+1/2$, $m=n+2$, $z=-3$,
and using the identity
\[
4^{n+1} n!(n+1/2-1)\ldots(n+1/2-n-2)
=3(2n)!
\]
we get (\ref{3avoiding3sumb}).
\end{proof}

\end{document}